\newtheorem{theorem}{Theorem}[section]
\theoremstyle{definition}
\theoremstyle{remark}
\numberwithin{equation}{section}
\DeclareMathOperator{\Tr}{Tr}
\DeclareMathOperator{\Id}{Id}
\DeclareMathOperator{\Ch}{Ch}
\newcommand{\Sym}[1]{S_{#1}}
\newcommand{\pp}{\mathbf{p}}
\newcommand{\qq}{\mathbf{q}}
\newtheorem{corollary}[theorem]{Corollary}
\newtheorem{conjecture}[theorem]{Conjecture}
\begin{document}

\title{Stanley character polynomials}
\author{Piotr \'Sniady}
\address{
Adam Mickiewicz University,
The Faculty of Mathematics and Computer Science, 
Collegium Mathematicum, 
ul.~Umultowska 87, 61-614 Poznań, Poland
 \newline
\indent 
Institute of Mathematics, Polish Academy of Sciences, 
\mbox{ul.~\'Sniadec\-kich 8,} \linebreak 00-956 Warszawa, Poland
} 
\email{piotr.sniady@amu.edu.pl}
\thanks{}

\dedicatory{Dedicated to Richard P.~Stanley on the occasion of his seventieth birthday.} 

\subjclass[2010]{Primary 20C30;  % Representations of finite symmetric groups
Secondary
05E10,  % Combinatorial aspects of representation theory
05A15,  % Exact enumeration problems, generating functions
05C10}  % Planar graphs; geometric and topological aspects of graph theory

\keywords{characters of symmetric groups, multirectangular Young diagrams, Stanley character polynomial, 
Stanley character formula, Stanley coordinates, Kerov polynomials}

\begin{abstract} 
Stanley considered suitably normalized characters 
of the symmetric groups on Young diagrams having a special geometric form, namely \emph{multirectangular Young diagrams}.
He proved that the character is a polynomial in the lengths of the sides of 
the rectangles forming the Young diagram and he
conjectured an explicit form of this polynomial.
This \emph{Stanley character polynomial} and this way of parametrizing the set of  Young diagrams
turned out to be a powerful tool for several problems of the dual combinatorics
of the characters of the symmetric groups and asymptotic representation theory, in particular to Kerov polynomials. 
\end{abstract}

\maketitle

We shall review Stanley's contribution to understanding of the \emph{normalized characters}
$\Ch_\pi(\pp\times \qq)$ of the symmetric groups corresponding to \emph{multirectangular Young diagrams} $\pp\times \qq$, 
see Figure~\ref{fig:multirectangular}. We will be guided by the following motivating example:
\begin{dmath}
\label{eq:motivating}
\Ch_5( \pp \times \qq)=
p_{1}^{5} q_{1} - 10 p_{1}^{4} q_{1}^{2} + 20 p_{1}^{3} q_{1}^{3} - 10
p_{1}^{2} q_{1}^{4} + \mathbf{1} p_{1} q_{1}^{5} + 15 p_{1}^{3} q_{1} - 40
p_{1}^{2} q_{1}^{2} + \mathbf{15} p_{1} q_{1}^{3} + \mathbf{8} p_{1} q_{1}+
\cdots+25 p_1 p_2 p_3 q_1 q_2 q_3+\cdots
\end{dmath}
(the full expression in the case $\pp=(p_1,p_2,p_3)$, $\qq=(q_1,q_2,q_3)$ of three rectangles --- shown in Figure~\ref{fig:multirectangular} --- has more than a hundred summands).

\begin{figure}[t]
\begin{tikzpicture}[scale=0.6]

\begin{scope}
\clip (0,0) -- (11,0) -- (11,2) -- (8,2) -- (8,5) -- (4,5) -- (4,7) -- (0,7) -- cycle;
\draw[black!20] (0,0) grid[step=1] (20,10); 
\end{scope}

\draw[ultra thick](0,0) -- (11,0) -- (11,2) -- (8,2) -- (8,5) -- (4,5) -- (4,7) -- (0,7) -- cycle;

\draw[dotted,thick] (8,2) -- (0,2)
(4,5) -- (0,5);

\begin{scope}[<->,thick,auto,dashed]
\draw (11.3,0) to node[swap] {$p_1$} (11.3,2);
\draw (0,1.7) to node[swap,shape=circle,fill=white] {$q_1$} (11,1.7);

\draw (8.3,2) to node[swap] {$p_2$} (8.3,5);
\draw (0,4.7) to node[swap,shape=circle,fill=white] {$q_2$} (8,4.7);

\draw (4.3,5) to node[swap] {$p_3$} (4.3,7);
\draw (0,6.7) to node[swap,shape=circle,fill=white] {$q_3$} (4,6.7);
\end{scope}

\end{tikzpicture}

\caption{Multirectangular Young diagram $\pp\times \qq$ with $\pp=(2,3,2)$ and $\qq=(11,8,5)$.}
\label{fig:multirectangular}
\end{figure}
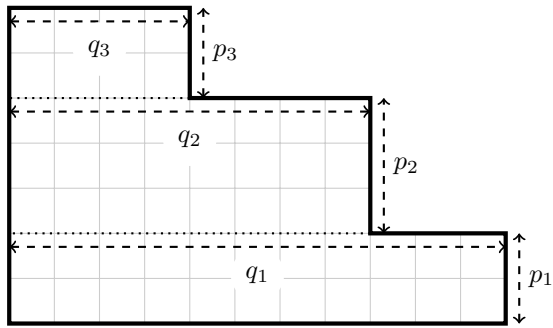

\section{Normalized characters of the symmetric groups}
\label{sec:normalized-characters}
 
In the study of the irreducible characters of the symmetric groups
\[ \chi^\lambda(\pi):= \Tr \rho^\lambda(\pi) \]
the usual viewpoint is to fix the irreducible representation $\rho^\lambda$ and to view the character 
as a function of (the conjugacy class of) the permutation $\pi$. It was a brilliant idea of Kerov and Olshanski \cite{KerovOlshanski1994}
to do roughly the opposite: fix the conjugacy class and to consider the character as a function on the set of Young diagrams.
In order for this \emph{dual approach} to be successful one has to choose the right normalization of characters, see below.

If $\pi$ is a partition of $k$ and $\lambda$ is a partition of $n$, the corresponding \emph{normalized character} is defined as
\begin{equation}
\label{eq:normalized-characters}
\Ch_\pi(\lambda):=
\begin{cases}
   \underbrace{n (n-1) \cdots (n-k+1)}_{\text{$k$ factors}} \frac{\displaystyle \chi^\lambda(\pi 1^{n-k})}{\displaystyle \chi^\lambda(1^n)} &
\text{if $n\geq k$}, \\
   0 & \text{otherwise},
\end{cases}
\end{equation}
where for the evaluation of the character $\chi^\lambda(\pi 1^{n-k})$ we identify the partition $\pi 1^{n-k}$ with an arbitrary permutation in the symmetric group $\Sym{n}$ with the appropriate cycle decomposition.

As we mentioned at the beginning, we shall review Stanley's contribution to understanding of such normalized characters
$\Ch_\pi$. In the following section we will present the context of the asymptotic representation theory where such characters play a very prominent role;
an impatient reader may jump directly to Section~\ref{sec:character-polynomial}.

\section{Asymptotic representation theory of symmetric groups}
\label{sec:asymptotic-representatation}

The above normalization \eqref{eq:normalized-characters} of the characters was chosen by Kerov and Olshanski \cite{KerovOlshanski1994}
in such a way, that for each partition $\pi$ the corresponding character 
$\lambda\mapsto\Ch_\pi(\lambda)$ can be expressed 
as a \emph{universal} multivariate polynomial in some simple
functionals $\big(\lambda\mapsto F_\alpha(\lambda) \big)_\alpha$ of shape of the Young diagram $\lambda$:
\begin{equation}
\label{eq:universal}
 \Ch_\pi= P_\pi (  F_\alpha ).  
\end{equation}
An example is postponed until Section~\ref{sec:free-cumulants}.
We say that the polynomial $P_\pi$ is \emph{universal} because it does not depend on the choice of $\lambda$.

There is some freedom for choosing such functionals of shape and several useful choices are available.
For applications it is important that it is possible to choose the functionals $F_\alpha(\lambda)$ in such a way 
that they describe the \emph{``macroscopic shape''} of $\lambda$ in some convenient way.

It should be stressed that equalities of type \eqref{eq:universal} are very appealing from the viewpoint of the 
\emph{asymptotic representation theory of symmetric groups} because they provide a direct link between
the values of the irreducible characters and the macroscopic shape of $\lambda$; a link that remains useful in the limit
when the number of boxes of $\lambda$ tends to infinity. 
This is quite opposite to the Murnaghan-Nakayama rule, the classical tool for calculating characters, 
for which the number of terms grows very quickly with the number of boxes of $\lambda$ and, due to cancellations, it is hard to obtain a meaningful asymptotic answer.

The link provided by equalities of type \eqref{eq:universal} between the characters and the macroscopic shape of $\lambda$
was the cornerstone of some upper bounds on the characters
\cite{RattanSniady2008} as well as 
several results concerning the statistical properties of the random irreducible 
component $\rho^\lambda$ of a given reducible representation.
These results include 
the law of large numbers for the shape of the corresponding random Young diagram~$\lambda$ \cite{Biane1998} 
and central limit theorem for the fluctuations of $\lambda$ 
around this shape \cite{Kerov1993,IvanovOlshanski2002,Sniady2006}.

Unfortunately, this result \eqref{eq:universal} of Kerov and Olshanski \cite{KerovOlshanski1994} is only existential
and it does not say too much on how exactly this polynomial $P_\pi$ looks like.
\emph{How to find such a polynomial explicitly?} 
This question is motivated not only by some algebraic-combinatorial curiosity,
since in all the papers cited in the previous paragraph 
the key step was to find some upper bounds on the asymptotic behavior of this polynomial $P_\pi$.

The results of Stanley which we discuss in this note are related to the problem of finding this polynomial explicitly
in the case when for the functionals of shape $(F_\alpha)$ we take \emph{the free cumulants}. We will review these quantities in the following section.

\section{Free cumulants and Kerov polynomials}
\label{sec:free-cumulants}

Biane \cite{Biane1998} (for a quick overview article see \cite{Biane2001a}) 
introduced a particularly nice family of functionals of the shape of a Young diagram,
namely \emph{the free cumulants} $R_2(\lambda), R_3(\lambda), \dots$. 
Their original definition (\emph{``Speicher's free cumulants
of Kerov's transition measure 
of the Young diagram $\lambda$''})
is somewhat technically involved.
For our purposes it is enough to keep in mind that this definition gives a concrete 
and computationally efficient relationship between the macroscopic shape of a Young diagram $\lambda$ 
and the sequence of its free cumulants $R_2(\lambda), R_3(\lambda),\dots$.

Their key feature is the following one: 
\emph{the free cumulant $R_{k+1}$ gives the first-order approximation for the value of the character $\Ch_k$
on the cycle of length $k$}, thus free cumulants give an (approximate) answer to the fundamental problem of the relationship 
between the characters and the macroscopic shape of the Young diagram. 
In the above statement we have to specify in which way the Young diagram should tend to infinity 
as there are many choices available.
In this note we are interested in the limit in which the Young diagram tends to infinity, maintaining its macroscopic shape; 
this kind of limit can be obtained by considering \emph{dilations} $s\lambda$ for $s\to\infty$ and for a fixed Young diagram $\lambda$, see Figure~\ref{fig:youngB}.
The above-mentioned key feature of the free cumulants can be formulated precisely as follows:
 for any $k\geq 1$ and any Young diagram $\lambda$
\begin{equation}
\label{eq:def-free-cumulants}
 R_{k+1}(\lambda) = \lim_{s\to\infty} \frac{\Ch_{k}(s\lambda)}{s^{k+1}}. 
\end{equation}
In fact, the above equality can be regarded as an alternative (but a bit implicit) definition of free cumulants.

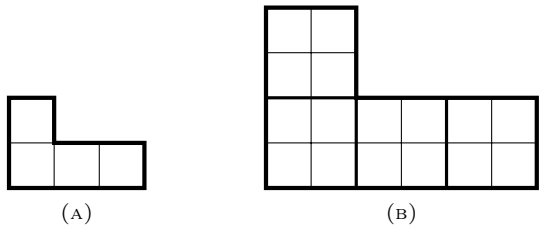
\begin{figure}[t]
\centering
\subfloat[][]{
\begin{tikzpicture}[scale=0.6]
\footnotesize

\draw[ultra thick] (0,0) -- (3,0) -- (3,1) -- (1,1) -- (1,2) -- (0,2) -- cycle; 
\clip (0,0) -- (3,0) -- (3,1) -- (1,1) -- (1,2) -- (0,2); 
\draw (0,0) grid (3,3);
\end{tikzpicture}
\label{fig:youngA}
} \hspace{8ex}
\subfloat[][]{
\begin{tikzpicture}[scale=1.2]
\footnotesize

\draw[ultra thick] (0,0) -- (3,0) -- (3,1) -- (1,1) -- (1,2) -- (0,2) -- cycle; 
\clip (0,0) -- (3,0) -- (3,1) -- (1,1) -- (1,2) -- (0,2); 
\draw (0,0) grid[step=0.5] (3,3);
\draw[very thick] (0,0) grid (3,3);
\end{tikzpicture} 
\label{fig:youngB}
}

\caption{\protect\subref{fig:youngA} Young diagram $\lambda=(3,1)$. 
\protect\subref{fig:youngB}
The dilation $2\lambda=(6,6,2,2)$ of the Young diagram $\lambda$ shown on the left.
Informally speaking, the original Young diagram and its dilation have the same macroscopic shape; they differ
just by the number of boxes.}
\end{figure}

The expansion \eqref{eq:universal} in terms of free cumulants was explicitly introduced by Kerov 
\cite{Kerov2000talk} (it appeared implicitly before, in one of the proofs of Biane \cite{Biane1998}),
in particular Kerov calculated the first few examples for the case when $\pi=(k)$ consists of only one part:
 \begin{align}
\label{eq:Kerov}
\notag \Ch_1 &= R_2, \\
\notag \Ch_2 &= R_3, \\
\notag \Ch_3 &= R_4 + R_2,   \\
\notag \Ch_4 &= R_5 + 5R_3,    \\     
\Ch_5 &=\mathbf{1} R_6 + \mathbf{15} R_4 + 5R_2^2 + \mathbf{8} R_2.
\end{align}
The right-hand sides are now called \emph{Kerov character polynomials}; we denote them by the special symbol
$K_k=K_k(R_2,R_3,\dots)$.
Here and in \eqref{eq:universal} we use a simplified notation and write $\Ch_\pi$ instead of $\Ch_\pi(\lambda)$
as well as we write $R_k$ instead of $R_k(\lambda)$; the equalities hold true for an arbitrary Young diagram $\lambda$.

Kerov \cite{Kerov2000talk} proved that the coefficients of these polynomials are integers
(for the proof see \cite{Biane2003}; this reference is also a good introduction to the topic of Kerov polynomials) 
and he conjectured that these coefficients are \emph{non-negative}. 
If this conjecture --- called \emph{Kerov positivity conjecture} --- is true, it is natural to ask:
\emph{are there some natural combinatorial objects,
the cardinalities of which are counted by the coefficients of Kerov polynomials?}
Thus Kerov's conjecture hinted on the existence of some mysterious hidden combinatorial structure behind the 
characters of the symmetric groups. 
For this reason it was a powerful force influencing the research in algebraic combinatorics
for almost a decade. 

The expansion of the character $\Ch_5$ in terms of the free cumulants \eqref{eq:Kerov}
and the expansion of this character on a multirectangular Young diagram \eqref{eq:motivating} 
have some coefficients in common, namely the numbers $1$, $15$ and $8$ (marked boldface in both equations).
This is not an accident; in fact it has been observed by Stanley \cite{Stanley2003/04} 
that the knowledge of the coefficients 
of the polynomial $\Ch_\pi(\pp\times \qq)$ can be used in order to calculate (some of) the coefficients of Kerov polynomials. 
We will review this link in the following section.

\section{Characters on rectangular Young diagrams}
\label{sec:rectangular}

\begin{figure}[t]
\begin{tikzpicture}[scale=0.7]

\begin{scope}
\draw[black!20] (0,0) grid[step=1] (5,4); 
\end{scope}

\draw[ultra thick] (0,0) rectangle (5,4);

\begin{scope}[<->,thick,auto,dashed]
\draw (4.3,0) to node[shape=circle,fill=white] {$p$} (4.3,4);
\draw (0,3.3) to node[swap,shape=circle,fill=white] {$q$} (5,3.3);
\end{scope}

\end{tikzpicture}

\caption{Rectangular Young diagram $p\times q$ with $p=4$ and $q=5$.}
\label{fig:rectangular}
\end{figure}
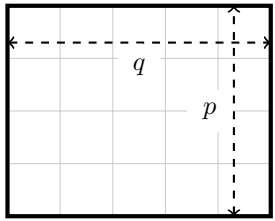

Stanley \cite{Stanley2003/04} considered rectangular Young diagrams 
$p\times q=(\underbrace{q,q,\dots,q}_{\text{$p$ times}})$, see
Figure~\ref{fig:rectangular}.
He proved the following partial result which concerns such diagrams.

\begin{theorem}[{\cite[Theorem 1]{Stanley2003/04}}]  
\label{thm:small-stanley}
Let $\pi\in \Sym{k}$. 
Then
\begin{equation}
\label{eq:stanley-simple}
 \Ch_\pi(p \times q) = \sum_{\substack{\sigma_1,\sigma_2\in \Sym{k} \\  \sigma_1 \sigma_2=\pi}}  
(-1)^{\sigma_1}\; q^{\kappa(\sigma_1)}\; p^{\kappa(\sigma_2)}, 
\end{equation}
where $\kappa(\alpha)$ denotes the number of the cycles of a permutation $\alpha\in \Sym{n}$,
and where for the purposes of the normalized character $\Ch_\pi$ we view $\pi\vdash k$ as a partition 
which gives the cycle structure of $\pi\in \Sym{k}$.
\end{theorem}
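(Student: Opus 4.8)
The plan is to compute $\Ch_\pi(p\times q)$ directly from the definition \eqref{eq:normalized-characters} using the interplay between the symmetric group $\Sym{n}$ and the general linear group, i.e.\ Schur--Weyl duality, specialized to a rectangular Young diagram. Fix $n=pq$ and realize the irreducible $\Sym{n}$-module $V_\lambda$ with $\lambda=p\times q$ inside a tensor space. The key geometric fact is that the rectangular diagram $p\times q$ corresponds, under the $\mathrm{GL}_p\times\mathrm{GL}_q$ action on $(\mathbb{C}^p\otimes\mathbb{C}^q)^{\otimes k}$, to a one-dimensional object: the isotypic component for the pair of rectangular diagrams sitting inside $\mathbb{C}^p$ and $\mathbb{C}^q$. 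Concretely, $S^{p\times q}(\mathbb{C}^{pq})$ appears with multiplicity one in the decomposition under $\mathrm{GL}_p\times\mathrm{GL}_q$, and this is what forces the character value to be a \emph{single} polynomial in $p$ and $q$ rather than a sum over standard tableaux.

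First I would set up the trace computation. Writing $N=pq$ and using that $\Ch_\pi(\lambda)$ is, up to the falling-factorial normalization, the value of the central element $\sum_{\text{$k$-cycles of type }\pi}$ acting on $V_\lambda$, I would express this via the action of $\Sym{k}$ on $k$ tensor legs of $(\mathbb{C}^p\otimes\mathbb{C}^q)^{\otimes k}$, with the remaining $N-k$ legs absorbed into the falling factorial $N(N-1)\cdots(N-k+1)$. The standard bookkeeping (this is exactly the computation behind the formula $\Ch_\pi(\lambda)=\sum \text{something}$ for general $\lambda$, but it collapses here) gives
\begin{equation*}
\Ch_\pi(p\times q)=\sum_{\sigma\in\Sym{k}} \chi^{p\times q}_{\text{twisted}}(\pi\sigma)\cdot(\text{weight}),
\end{equation*}
and the point is that on a rectangular diagram the relevant representation-theoretic data of $\mathrm{GL}_p\times\mathrm{GL}_q$ is captured by the pair of permutations: one permutation tracks how the $\mathbb{C}^p$-legs are permuted, the other the $\mathbb{C}^q$-legs, and each contributes $p^{\kappa(\cdot)}$, resp.\ $q^{\kappa(\cdot)}$, from taking the trace of a permutation operator on a tensor power of $\mathbb{C}^p$, resp.\ $\mathbb{C}^q$. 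The sign $(-1)^{\sigma_1}$ enters because the rectangular shape $p\times q$ is obtained by \emph{antisymmetrizing} in the $\mathbb{C}^p$ direction (a column of length $p$) and \emph{symmetrizing} in the $\mathbb{C}^q$ direction, so the projector onto $V_{p\times q}$ inside the tensor product carries the sign character of $\Sym{k}$ on one factor.

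The main identity to extract is therefore: the coefficient with which a fixed pair $(\sigma_1,\sigma_2)$ with $\sigma_1\sigma_2=\pi$ contributes is exactly $(-1)^{\sigma_1}\,q^{\kappa(\sigma_1)}\,p^{\kappa(\sigma_2)}$, and summing over all such factorizations of $\pi$ reproduces \eqref{eq:stanley-simple}. I expect the main obstacle to be the careful handling of the normalization: matching the falling factorial $N(N-1)\cdots(N-k+1)$ against the ``extra'' tensor legs and the dimension $\chi^\lambda(1^n)=\dim V_\lambda$ that appears in the denominator of \eqref{eq:normalized-characters}, so that all the $N$-dependence cancels and one is left with a clean sum over pairs of permutations. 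A clean way to manage this is to use the Frobenius-type formula expressing $\Ch_\pi(\lambda)$ as a sum over fillings, together with the Jucys--Murphy / content evaluation on a rectangle (where the multiset of contents of $p\times q$ is $\{\,j-i : 0\le i<p,\ 0\le j<q\,\}$), and then recognize the resulting generating function as the sum over factorizations. Alternatively, one can prove \eqref{eq:stanley-simple} by first checking it for $\pi$ a single cycle via an explicit induction on $k$ using the Jucys--Murphy elements, and then extending multiplicatively; but the tensor/Schur--Weyl argument is the most transparent and makes the roles of $p$ and $q$ manifestly symmetric (up to the sign twist).
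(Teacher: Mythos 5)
The survey you were given does not actually prove Theorem~\ref{thm:small-stanley}: it states the result with a citation to Stanley's paper and moves on, so there is no in-paper argument to compare against. Judged on its own terms, your plan assembles the right raw materials --- the identity $\Tr(\sigma)=p^{\kappa(\sigma)}$ for a permutation acting on $(\mathbb{C}^p)^{\otimes k}$, a sign twist attached to the ``column'' direction of the rectangle, and (in your fallback route) the content multiset $\{j-i\}$ of $p\times q$ together with Jucys--Murphy elements. The cleanest way to organize what you are gesturing at is to note that the right-hand side of \eqref{eq:stanley-simple} is the coefficient of $\pi$ in the product of two \emph{central} elements $\bigl(\sum_{\sigma}(-1)^{\sigma}q^{\kappa(\sigma)}\sigma\bigr)\bigl(\sum_{\tau}p^{\kappa(\tau)}\tau\bigr)=\prod_{i=1}^{k}(q-J_i)(p+J_i)$, which acts on the irreducible $\Sym{k}$-module $S^\mu$ by the scalar $\prod_{b\in\mu}(q-c(b))(p+c(b))$; Fourier inversion on $\Sym{k}$ then rewrites the right-hand side of \eqref{eq:stanley-simple} as $\tfrac{1}{k!}\sum_{\mu\vdash k}f^\mu\chi^\mu(\pi)\prod_{b\in\mu}(p+c(b))(q-c(b))$, where $f^\mu=\chi^\mu(1^k)$.

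The genuine gap is that you never isolate, let alone prove, the statement where the rectangularity is actually used. Your pivotal claim --- that $S^{p\times q}$ sits with multiplicity one inside $(\mathbb{C}^p\otimes\mathbb{C}^q)^{\otimes k}$ as ``the isotypic component of a pair of rectangles'' --- is not a correct statement for $k<pq$: the $k$-fold tensor power only sees partitions of $k$, and the decomposition of the multiplicity spaces under $GL_p\times GL_q$ is governed by Kronecker coefficients, not by a single one-dimensional piece. What all the ``standard bookkeeping'' must ultimately reduce to is the identity
\[
\Ch_\pi(p\times q)\;=\;\frac{1}{k!}\sum_{\mu\vdash k} f^\mu\,\chi^\mu(\pi)\prod_{b\in\mu}\bigl(p+c(b)\bigr)\bigl(q-c(b)\bigr),
\]
equivalently, comparing coefficients of $\chi^\mu$, the statement that $\frac{n!}{(n-k)!}\frac{1}{f^{p\times q}}\sum_{\nu\vdash n-k}c^{\,p\times q}_{\mu\nu}f^{\nu}=\frac{k!}{f^\mu}\,s_\mu(1^p)\,s_{\mu'}(1^q)$ for every $\mu\vdash k$, with $n=pq$. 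Establishing this needs two inputs absent from your sketch: the complementation property of Littlewood--Richardson coefficients of rectangular shape ($c^{\,p\times q}_{\mu\nu}\in\{0,1\}$, nonzero exactly when $\nu$ is the $180$-degree rotated complement of $\mu$ in the rectangle), and the hook-content formula to match the resulting hook products. Until that identity is on the table, the cancellation of the falling factorial $n(n-1)\cdots(n-k+1)$ against $\dim S^{p\times q}$ --- which you correctly flag as the main obstacle --- cannot be carried out; the proposal defers precisely the step where the theorem lives. (For the record, Stanley's own proof runs through symmetric-function identities of this kind, while the later F\'eray--\'Sniady argument for the full multirectangular formula \eqref{eq:stanley-general} is a different, more elementary computation with Young symmetrizers.)
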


The above formula for the character
might not be very useful from the practical point of view 
(the number of summands grows very quickly with $k$).
Nevertheless, it is very convenient for several problems of the asymptotic representation theory 
in which one studies the character $\Ch_\pi$ for a fixed conjugacy class $\pi$ 
while the Young diagram $\lambda$ (i.e., $p$ and $q$) tends to infinity, 
because the number of terms, no matter how large,
remains constant and each term has a very simple form.
It was one of the first few explicit formulas for the characters $\Ch_\pi$ with this property.
Its drawback is that it gives the characters only for Young diagrams of some specific form,
nevertheless it could still be used to find some new bounds for the characters on Young diagrams of arbitrary shape
\cite{RattanSniady2008}.

This theorem together with the definition \eqref{eq:def-free-cumulants} implies that the free cumulant 
$R_k(p\times q)$ is equal to the the sum of these terms on the right-hand side of \eqref{eq:stanley-simple}
applied to $\pi=(1,2,\dots,k)$ for which $\kappa(\sigma_1)+\kappa(\sigma_2)=k+1$
(in the literature one says that $\pi=\sigma_1 \sigma_2$ is a \emph{minimal factorization}). 
In particular,
\begin{equation}
\label{eq:free-cumulants-first-approx}
 R_{k+1}(p\times q) = p q^{k}+ \text{(summands which are divisible by $p^2$)}.    
\end{equation}
The latter equality implies that the coefficients of the linear terms of Kerov polynomials are directly related to some coefficients of the polynomial
$\Ch_k(p\times q)$:
\begin{equation}
\label{eq:kerov-linear}
 [R_{i} ] K_k = [p q^{i-1} ] \Ch_k(p\times q).   
\end{equation}
Stanley's formula \eqref{eq:stanley-simple} gives an explicit combinatorial interpretation to the right-hand-side,
thus we obtain the following partial result which supports Kerov positivity conjecture.
This result appears implicitly in the work of Stanley
\cite[pp.~9--10]{Stanley2003/04} and explicitly in an independent work of Biane \cite[Theorem 6.1]{Biane2003}
who used a closely related method.

\begin{corollary}
\label{coro:linear-kerov}
The linear term $[R_i] K_k$ of the Kerov polynomial is equal to the number of pairs 
$(\sigma_1,\sigma_2)\in \Sym{k}\times \Sym{k}$ such that $\sigma_1 \sigma_2=(1,2,\dots,k)$, and 
the number of cycles of $\sigma_1$ is equal to $i-1$ while the number of cycles of $\sigma_2$ is equal to~$1$.    
\end{corollary}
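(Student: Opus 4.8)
The plan is to obtain Corollary~\ref{coro:linear-kerov} directly from the chain of identities already assembled in this section, so that essentially no new work is required beyond unwinding the definitions. First I would invoke the relation \eqref{eq:kerov-linear}, which expresses the linear coefficient $[R_i]K_k$ as the coefficient $[p q^{i-1}]\Ch_k(p\times q)$. Then I would feed in Stanley's formula \eqref{eq:stanley-simple} with $\pi=(1,2,\dots,k)$: since every monomial on its right-hand side has the shape $q^{\kappa(\sigma_1)} p^{\kappa(\sigma_2)}$, the coefficient of $p^1 q^{i-1}$ is precisely the (signed) count of factorizations $\sigma_1\sigma_2=(1,2,\dots,k)$ with $\kappa(\sigma_1)=i-1$ and $\kappa(\sigma_2)=1$. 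So the only two things left to check are that the sign $(-1)^{\sigma_1}$ equals $+1$ for all such factorizations, and that there is no cancellation hidden elsewhere (i.e.\ that the monomial $pq^{i-1}$ receives no contribution except from terms with these cycle counts --- which is immediate, since $\kappa(\sigma_2)=1$ and $\kappa(\sigma_1)=i-1$ is forced by the exponents).

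The genuine content, then, is the sign computation, and this is the step I expect to be the only real obstacle --- though a mild one. The point is that a pair $(\sigma_1,\sigma_2)$ with $\sigma_1\sigma_2=(1,2,\dots,k)$ and $\kappa(\sigma_1)+\kappa(\sigma_2)$ as large as possible is a \emph{minimal factorization} in the sense recalled after \eqref{eq:stanley-simple}. For the full $k$-cycle the maximal value of $\kappa(\sigma_1)+\kappa(\sigma_2)$ is $k+1$; when $\kappa(\sigma_2)=1$ this forces $\kappa(\sigma_1)=i-1$ with $i-1 = k+1-1 = k$? No --- one must be careful: here we do \emph{not} require minimality, we only fix the two cycle numbers, and $i$ ranges over $2,\dots,k$. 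The key algebraic fact I would use is that the sign of a permutation $\alpha\in\Sym{k}$ equals $(-1)^{k-\kappa(\alpha)}$. Hence $(-1)^{\sigma_1} = (-1)^{k-\kappa(\sigma_1)} = (-1)^{k-(i-1)}$, and similarly $(-1)^{\sigma_2} = (-1)^{k-1}$; since $\sigma_1\sigma_2=(1,2,\dots,k)$ is a $k$-cycle we have $(-1)^{\sigma_1}(-1)^{\sigma_2} = (-1)^{k-1}$, which gives $(-1)^{\sigma_1} = (-1)^{k-1}(-1)^{-(k-1)} \cdot (-1)^{\sigma_1}$, i.e.\ consistency, and solving yields $(-1)^{\sigma_1} = (-1)^{k - (i-1)}$ together with the constraint $(-1)^{k-(i-1)}(-1)^{k-1} = (-1)^{k-1}$, forcing $i-1$ even is \emph{not} what happens --- rather, one checks that the parity identity $\kappa(\sigma_1)+\kappa(\sigma_2) \equiv k+1 \pmod 2$ is automatic here (both sides being determined mod $2$ by the $k$-cycle), so that $\kappa(\sigma_1)=i-1$ and $\kappa(\sigma_2)=1$ are compatible only when $i$ is odd, and precisely for such $i$ one has $(-1)^{\sigma_1} = (-1)^{k-(i-1)}$; a short case check on the parity of $k$ versus $i$ then confirms this sign is $+1$. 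The cleanest route, which I would actually write up, is simply: $(-1)^{\sigma_1} = (-1)^{k-\kappa(\sigma_1)} = (-1)^{k-(i-1)}$, and since a factorization with $\kappa(\sigma_1)=i-1,\ \kappa(\sigma_2)=1$ exists only when $k-(i-1)$ is even (by the parity constraint coming from $\operatorname{sgn}(\sigma_1\sigma_2) = (-1)^{k-1}$), we conclude $(-1)^{\sigma_1}=+1$.

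With the sign pinned down to $+1$, \eqref{eq:stanley-simple} and \eqref{eq:kerov-linear} together say exactly that $[R_i]K_k$ counts, with all weights equal to one, the pairs $(\sigma_1,\sigma_2)\in\Sym{k}\times\Sym{k}$ with $\sigma_1\sigma_2=(1,2,\dots,k)$, $\kappa(\sigma_1)=i-1$, $\kappa(\sigma_2)=1$, which is the assertion. I would close by remarking that this gives a combinatorial, manifestly non-negative formula for the linear coefficients, thereby verifying Kerov's positivity conjecture in this special case, and by noting that the same bookkeeping applied to \eqref{eq:free-cumulants-first-approx} is what produced \eqref{eq:kerov-linear} in the first place, so the corollary is really just the endpoint of the discussion preceding it. The only arithmetic to be careful about in the write-up is the parity bookkeeping for the sign; everything else is a direct substitution.
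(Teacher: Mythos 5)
Your proposal is correct and follows essentially the same route as the paper: combine \eqref{eq:kerov-linear} with Stanley's formula \eqref{eq:stanley-simple} for $\pi=(1,2,\dots,k)$ and read off the coefficient of $pq^{i-1}$. The only step the paper leaves implicit is the verification that $(-1)^{\sigma_1}=+1$ for every contributing pair, and your parity argument via $\mathrm{sgn}(\alpha)=(-1)^{k-\kappa(\alpha)}$ and $\mathrm{sgn}(\sigma_1)\mathrm{sgn}(\sigma_2)=(-1)^{k-1}$ handles this correctly (though the write-up of that step should be tightened to the ``cleanest route'' version you state at the end).
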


The above reasoning explains the coincidences between some of the coefficients of \eqref{eq:Kerov} and \eqref{eq:motivating}. 
Unfortunately, the information about the behavior of the characters only on the rectangular Young diagrams
is not sufficient for most purposes, including study of more complicated coefficients of the Kerov polynomials.
In the following section we will discuss Stanley's attempt to overcome this difficulty.

\section{Stanley character polynomial}
\label{sec:character-polynomial}

In the same paper \cite{Stanley2003/04} Stanley introduced an interesting class of  
\emph{multirectangular Young diagrams $\pp\times \qq$}  
(where $\pp=(p_1,\dots,p_\ell)$ is a tuple of non-negative integers 
and $\qq=(q_1,\dots,q_\ell)$ is a non-increasing tuple of non-negative integers, 
see Figure~\ref{fig:multirectangular}) 
and initiated the investigation of the corresponding character $\Ch_\pi(\pp\times \qq)$.
The numbers $p_1,p_2,\dots,q_1,q_2,\dots$ are now called \emph{Stanley coordinates}.
One could argue that the introduction of the class of multirectangular Young diagrams was rather naive, as
\emph{any} Young diagram $\lambda=(\lambda_1,\lambda_2,\dots)$ 
can be regarded as a multirectangular one for the trivial choice $\pp=(\lambda_1,\lambda_2,\dots)$, $\qq=(1,1,\dots)$.
However, the core of Stanley's idea  was the possibility of  considering 
the character $\Ch_\pi(\pp\times \qq)$ for a fixed conjugacy class $\pi$,
as a \emph{polynomial} in the variables
$p_1,p_2,\dots,q_1,q_2,\dots$ and studying its coefficients.
For an example see \eqref{eq:motivating}.
This polynomial, now called \emph{the Stanley character polynomial}, turned out to be a powerful algebraic tool for
investigation of the characters of the symmetric groups.

Note that the roles played by indeterminates $\pp=(p_1,\dots,p_\ell)$ are very different than the roles played by $\qq=(q_1,\dots,q_\ell)$; in particular the multirectangular Young diagram $\qq\times\pp$ might not be
well-defined. 
The reader may also complain that the description of the fundamental involution of Young diagrams
$\lambda \mapsto \lambda'$ given by transposition is rather awkward in Stanley coordinates
and thus one should rather find another parametrization of multirectangular Young diagrams
which would make this symmetry more transparent; for example some new parametrization in which 
$(\pp \cdot \qq)'=\qq\cdot \pp$.
However, it seems that
the calculations such as the one from Section \ref{sec:kerov-toy-example} work best in the original Stanley coordinates
and not in such more democratic variations.

Stanley's investigation culminated in a truly remarkable unpublished preprint \cite{Stanley2006},
in which the following conjectural explicit formula \eqref{eq:stanley-general} 
for the Stanley character polynomial was stated. 
This formula is similar to the double sum over the symmetric group from \eqref{eq:stanley-simple},
but it additionally involves a summation over \emph{``colorings''} of the cycles
$f_1\colon C(\sigma_1)\rightarrow\{1,\dots,\ell\}$ and $f_2\colon C(\sigma_2)\rightarrow\{1,\dots,\ell\}$,
where $C(\alpha)$ denotes the set of cycles of $\alpha\in\Sym{k}$.
We sum over all possible colorings $f_2$ of the cycles of $\sigma_2$, 
while the coloring $f_1$ of the cycles of $\sigma_1$ is determined.

\begin{conjecture}[{\cite[Conjecture 3]{Stanley2006}}]
\label{conj:stanley-conjecture}
For any $\pi\in\Sym{k}$
\begin{multline}
\label{eq:stanley-general}
 \Ch_\pi(\pp\times \qq) \\ = \sum_{\substack{\sigma_1,\sigma_2\in \Sym{k} \\  \sigma_1 \sigma_2=\pi}}
\; \sum_{f_2\colon C(\sigma_2)\rightarrow\{1,2,\dots,\ell\}}   
(-1)^{\sigma_1}\; \left( \prod_{c\in C(\sigma_1)} q_{f_1(c)} \right) 
\left( \prod_{d\in C(\sigma_2)} p_{f_2(d)} \right),
\end{multline}
where the coloring
$f_1\colon C(\sigma_1)\rightarrow\{1,\dots,\ell\}$ is defined by
\[ f_1(c) = \max_{\substack{d\in C(\sigma_2), \\ c \cap d \neq \emptyset}}
  f_2(d);\]
the maximum is taken over all cycles $d\in C(\sigma_2)$ which have a non-empty intersection with the cycle
$c\in C(\sigma_1)$.
\end{conjecture}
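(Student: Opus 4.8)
The plan is to reduce Conjecture~\ref{conj:stanley-conjecture} to Theorem~\ref{thm:small-stanley} by a specialization argument. The key observation is that a multirectangular Young diagram $\pp\times\qq$ with integer entries can be obtained from a large ordinary rectangle by a suitable operation; more precisely, if all $p_i$ and $q_i$ are non-negative integers, then $\pp\times\qq$ is (up to sorting the rows) a sub-diagram of the rectangle $p\times q$ with $p=p_1+\cdots+p_\ell$ and $q=q_1$, and one can hope to track how Stanley's formula~\eqref{eq:stanley-simple} ``splits'' when the single parameter $q$ is refined into the tuple $\qq$. First I would recall that both sides of~\eqref{eq:stanley-general} are polynomials in $p_1,\dots,p_\ell,q_1,\dots,q_\ell$ (the left side by Stanley's polynomiality theorem, the right side manifestly), so it suffices to prove the identity when the $p_i$ and $q_i$ are arbitrary non-negative integers with $q_1\geq q_2\geq\cdots\geq q_\ell$, and in fact, by a further density/Vandermonde argument, it is enough to treat the case where the $q_i$ are \emph{distinct} positive integers and then the strictly-decreasing constraint is harmless.

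The main step is a combinatorial ``coloring = which rectangle'' dictionary. Starting from Theorem~\ref{thm:small-stanley} applied to the genuine rectangle of size $p\times q$ with $p=\sum p_i$ and $q=\sum q_j$ (or a telescoping thereof), I would expand each factor $q^{\kappa(\sigma_1)} = \bigl(q_1+\cdots+q_\ell\bigr)^{\kappa(\sigma_1)}$ and $p^{\kappa(\sigma_2)} = \bigl(p_1+\cdots+p_\ell\bigr)^{\kappa(\sigma_2)}$ by the multinomial theorem; a choice of term in this expansion is exactly a pair of colorings $(f_1,f_2)$ of $C(\sigma_1)$ and $C(\sigma_2)$ by $\{1,\dots,\ell\}$. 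The subtlety is that the naive expansion gives an \emph{independent} sum over all $f_1$ and all $f_2$, whereas~\eqref{eq:stanley-general} forces the constraint $f_1(c)=\max_{d\cap c\neq\emptyset} f_2(d)$. Resolving this discrepancy is the heart of the argument: one must show that the sum of the ``wrong'' terms — those with $(f_1,f_2)$ not satisfying the max-constraint — cancels. I would try to establish this via a sign-reversing involution on the set of triples $(\sigma_1,\sigma_2,f_1,f_2)$ violating the constraint, or alternatively via an inclusion-exclusion/Möbius argument on the lattice of colorings, using the recursive structure of the character values on nested multirectangular diagrams $\pp\times\qq$ obtained by removing the last rectangle $(p_\ell,q_\ell)$.

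Concretely, I would set up an induction on $\ell$. For $\ell=1$ the statement is exactly Theorem~\ref{thm:small-stanley}. For the inductive step, I would use a ``peeling'' identity expressing $\Ch_\pi\bigl((p_1,\dots,p_\ell)\times(q_1,\dots,q_\ell)\bigr)$ in terms of characters of diagrams with fewer rectangles — for instance by writing $q_\ell$-many rows of the tallest strip and applying a branching-type relation, or by using the fact that $\Ch_\pi$ is, for fixed $p_1,\dots,p_{\ell-1},q_1,\dots,q_{\ell-1}$, a polynomial in $(p_\ell,q_\ell)$ whose coefficients are themselves character-type quantities. Matching this recursion against the corresponding recursion satisfied by the right-hand side of~\eqref{eq:stanley-general} — obtained by isolating the cycles $d\in C(\sigma_2)$ with $f_2(d)=\ell$ and the cycles $c\in C(\sigma_1)$ they force to have $f_1(c)=\ell$ — should close the induction.

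The hard part will be precisely this matching of recursions: identifying the correct ``peeling'' operation on multirectangular diagrams so that its combinatorial shadow on factorizations is exactly ``delete all cycles colored $\ell$,'' and verifying that the max-rule for $f_1$ is the unique constraint consistent with that peeling. A second, more technical obstacle is bookkeeping the signs $(-1)^{\sigma_1}$ under the peeling: removing cycles from $\sigma_1$ and $\sigma_2$ changes the ambient symmetric group, and one must check that $(-1)^{\sigma_1}$ factors compatibly (this is where the asymmetry between the roles of $\pp$ and $\qq$, emphasized in the text, enters in an essential way, since only $\sigma_1$ carries the sign). I expect that once the right peeling lemma is in place, the sign analysis and the reduction to Theorem~\ref{thm:small-stanley} are routine.
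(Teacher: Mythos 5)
First, a point of calibration: the paper does not prove this statement at all --- it is presented as Stanley's Conjecture~3, with the proofs attributed to F\'eray \cite{Feray2010} and to the later, more elementary argument of \cite{FeraySniady2011}, whose structure is sketched in Section~\ref{sec:stanley-embedding} (reinterpret the right-hand side of \eqref{eq:stanley-general} as a signed count of embeddings of bipartite maps into the Young diagram, and prove that embedding formula for \emph{arbitrary} Young diagrams). Your plan goes a genuinely different route --- reduction to the one-rectangle case of Theorem~\ref{thm:small-stanley} by induction on $\ell$ --- so it must be judged on its own merits, and as written it has two concrete gaps that are not bookkeeping issues but the entire mathematical content.

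The first gap is that your starting point misreads the geometry of Stanley coordinates. The diagram $\pp\times\qq$ is \emph{not} the rectangle $(\textstyle\sum_i p_i)\times(\sum_j q_j)$: in the convention of the paper, $q_i$ is the actual width of the $i$-th block of $p_i$ rows, not an increment, so the multinomial expansion of $q^{\kappa(\sigma_1)}=(q_1+\cdots+q_\ell)^{\kappa(\sigma_1)}$ in \eqref{eq:stanley-simple} does not correspond to any diagram you are evaluating the character on. More fundamentally, $\Ch_\pi$ is not additive, multiplicative, or in any elementary way ``local'' over a decomposition of $\lambda$ into sub-diagrams, so knowing $\Ch_\pi$ on a large rectangle containing $\pp\times\qq$ gives no direct handle on $\Ch_\pi(\pp\times\qq)$. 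The second gap is the ``peeling identity'': your induction on $\ell$ rests entirely on a branching-type relation expressing $\Ch_\pi((p_1,\dots,p_\ell)\times(q_1,\dots,q_\ell))$ through characters of diagrams with $\ell-1$ rectangles, whose combinatorial shadow on factorizations $\sigma_1\sigma_2=\pi$ is ``delete the cycles colored $\ell$.'' No such relation is exhibited, and none of the known proofs proceeds this way; producing it is essentially equivalent to proving the theorem, since deleting cycles changes the ambient symmetric group $\Sym{k}$ and there is no evident identity relating $\Ch_\pi$ for different $k$ that matches removal of a rectangle. The same applies to the sign-reversing involution that is supposed to cancel the colorings violating the max-rule: it is named but not constructed, and the max-rule is exactly the nontrivial structure (in the language of Section~\ref{sec:stanley-embedding}, it encodes which columns are long enough to contain all the rows met by a given white vertex). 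Until the peeling lemma and the cancellation are actually supplied, this is a research programme rather than a proof; I would encourage you instead to prove the embedding formula for general $\lambda$ by induction on the number of boxes, as in \cite{FeraySniady2011}, and deduce \eqref{eq:stanley-general} by counting embeddings into $\pp\times\qq$.
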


The above formula, which is currently called \emph{Stanley character formula},
might at first sight scare the Reader; in the following section we will 
get familiar with it by analyzing a toy example, while in Section~\ref{sec:stanley-embedding}
we will present it in an equivalent, but more transparent form.

\section{Stanley character polynomials in action: \\ toy example of Kerov polynomials}
\label{sec:kerov-toy-example}

A monomial in the variables $p_1,\dots,p_\ell,q_1,\dots,q_\ell$ is called \emph{$\pp$-square-free} if it is not divisible by 
$p_i^2$ for any $1\leq i\leq \ell$. It turns out that such $\pp$-square-free terms of Stanley character polynomials 
encode in a convenient way the information about Kerov polynomials. In this section
we will explore this connection in a 
toy example. The reasoning presented in this section is a simplified version of the one from \cite{DolpolhkFeraySniady2009}.

In the following we shall assume the validity of Conjecture \ref{conj:stanley-conjecture}.
Using the topological tools which will be discussed in Section \ref{sec:stanley-embedding} one can easily show
the following extension of \eqref{eq:free-cumulants-first-approx} for $\pp=(p_1,p_2)$, $\qq=(q_1,q_2)$:
\begin{multline} 
\label{eq:free-cumulant-multirectangular}
R_{k+1}(\pp\times \qq) = p_1 q_1^{k} +p_2 q_2^{k} -
k \sum_{\substack{j_1\geq 1\\ j_2\geq 2 \\ j_1+j_2=k+1}} p_1 p_2 q_1^{j_1-1} q_2^{j_2-1} 
\\
+ \text{(summands which are divisible by $p_1^2$ or by $p_2^2$)}. 
\end{multline}

Assume that $F$ is a function on the set of Young diagrams which can be expressed as a polynomial in the free cumulants 
$R_2,R_3,\dots$. Then $F$ evaluated on the multirectangular Young diagram $\pp\times \qq$ becomes a polynomial in Stanley coordinates
$p_1,\dots,p_\ell,q_1,\dots,q_\ell$.
By a direct calculation based on \eqref{eq:free-cumulant-multirectangular} and by considering separately the three cases: 
(i) when $F=R_{k+1}$ is linear in free cumulants, (ii) when $F=R_{k_1+1} R_{k_2+1}$ is quadratic, and, 
(iii) when $F$ is a product of at least three free cumulants, it follows by linearity that for an arbitrary choice of $F$:
\begin{equation}
\label{eq:stanley-polynomial-symmetry}
\left[p_1 p_2 q_1^{j_1-1} q_2^{j_2-1} \right] F(\pp\times \qq) = \left[p_1 p_2 q_1^{j_2-1} q_2^{j_1-1} \right] F (\pp\times \qq) \qquad \text{for } j_1,j_2\geq 2,
\end{equation}
i.e., if the exponents of $q_1$ and $q_2$ are strictly positive, the coefficient does not change if we swap them.

Again by \eqref{eq:free-cumulant-multirectangular} and by considering separately the same three cases as above,
it follows for $j_1\neq j_2$ that the coefficient of the ``Kerov polynomial'' for $F$ is given by:
\begin{equation}
\label{eq:kerov-polynomial-toy-example}
[R_{j_1} R_{j_2}] F =\left[p_1 p_2 q_1^{j_1-1} q_2^{j_2-1}\right] F(\pp\times \qq) - 
\left[p_1 p_2 q_2^{j_1+j_2-2}\right] F(\pp\times \qq);    
\end{equation}
compare with \eqref{eq:kerov-linear}.
In particular, \eqref{eq:stanley-polynomial-symmetry} and \eqref{eq:kerov-polynomial-toy-example}
can be applied in the special case when $F=\Ch_k$ is the normalized character of the symmetric group;
in this case the left-hand-side of \eqref{eq:kerov-polynomial-toy-example} is just the coefficient $[R_{j_1} R_{j_2}] K_k$
of the usual Kerov polynomial.

These results on $\pp$-square-free terms of Stanley character polynomials are sufficient to find a combinatorial
interpretation of the quadratic coefficients of Kerov polynomials; however this time the proof will be much more
complicated than the one of Corollary \ref{coro:linear-kerov}.
This result was first proved by F\'eray \cite[Theorem 1.4.4]{Feray2010}.

\begin{theorem}
\label{theo:quadratic-reformulated}
For all integers $j_1\neq j_2$ such that $j_1,j_2\geq 2$ and $k\geq 1$ the quadratic coefficient $[R_{j_1} R_{j_2}] K_{k}$ of
the Kerov polynomial is equal to the number of triples $(\sigma_1,\sigma_2,f)$ with the following
properties:
\begin{enumerate}[label=(\alph*)]
 \item \label{enum:quadratic-a} 
$\sigma_1,\sigma_2\in \Sym{k}$ are such that $\sigma_1 \sigma_2=(1,2,\dots,k)$;
 \item $\sigma_2$ consists of two cycles;
 \item $\sigma_1$ consists of $j_1+j_2-2$ cycles;
 \item 
\label{enum:quadratic-c} 
$f\colon C(\sigma_2)\rightarrow \{1,2\}$ is a bijective labeling of the two cycles
of $\sigma_2$;
 \item for each cycle $c\in C(\sigma_2)$ there are at least $j_{f(c)}$ cycles
of $\sigma_1$ which intersect $c$ nontrivially.
\end{enumerate}
\end{theorem}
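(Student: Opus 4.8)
The plan is to combine the two identities \eqref{eq:stanley-polynomial-symmetry} and \eqref{eq:kerov-polynomial-toy-example} (specialized to $F = \Ch_k$, so that the left-hand side becomes the desired Kerov coefficient $[R_{j_1} R_{j_2}] K_k$) with the explicit combinatorial expansion of the Stanley character polynomial from Conjecture \ref{conj:stanley-conjecture}. Concretely, I would first use \eqref{eq:kerov-polynomial-toy-example} to write
\[
[R_{j_1} R_{j_2}] K_k = \left[p_1 p_2 q_1^{j_1-1} q_2^{j_2-1}\right] \Ch_k(\pp\times\qq) - \left[p_1 p_2 q_2^{j_1+j_2-2}\right] \Ch_k(\pp\times\qq),
\]
and then plug in the right-hand side of \eqref{eq:stanley-general} with $\pi=(1,2,\dots,k)$ and $\ell=2$, reading off which triples $(\sigma_1,\sigma_2,f_2)$ contribute to each of the two monomial coefficients.

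The first bookkeeping step is to understand which terms of \eqref{eq:stanley-general} are $\pp$-square-free of the shape $p_1 p_2 q_1^{a} q_2^{b}$. The $p$-part forces $\sigma_2$ to have exactly two cycles and $f_2$ to be a bijection onto $\{1,2\}$ — this already produces conditions (b) and (d). The $q$-part is $\prod_{c\in C(\sigma_1)} q_{f_1(c)}$, so the total $q$-degree equals $\kappa(\sigma_1) = j_1+j_2-2$, giving condition (c); the sign $(-1)^{\sigma_1}$ is then constant and positive because with $\sigma_2$ having two cycles and $\sigma_1\sigma_2=(1,2,\dots,k)$ a $k$-cycle one computes $\kappa(\sigma_1)+\kappa(\sigma_2) = k+1 + 2(j_1+j_2-2-k)$... more precisely, one needs the parity bookkeeping $(-1)^{\sigma_1} = (-1)^{k-\kappa(\sigma_1)}$ to come out $+1$ for exactly the configurations counted, which is a short check using that $\sigma_1$ and $(1,\dots,k)\sigma_2^{-1}$ have the same parity. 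The recolouring rule $f_1(c) = \max_{d:\, c\cap d\neq\emptyset} f_2(d)$ means $f_1(c) = 2$ unless $c$ intersects \emph{only} the cycle of $\sigma_2$ labelled $1$, in which case $f_1(c)=1$. Hence the exponent of $q_1$ in the monomial equals the number of cycles of $\sigma_1$ that meet the $f_2^{-1}(1)$-cycle of $\sigma_2$ but not the $f_2^{-1}(2)$-cycle, and the exponent of $q_2$ is the rest.

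With this dictionary in hand, the two coefficients on the right-hand side can be compared. For a fixed triple $(\sigma_1,\sigma_2,f)$ satisfying (a)--(d), let $a_1$ (resp.\ $a_2$) be the number of cycles of $\sigma_1$ meeting only the cycle labelled $1$ (resp.\ only the cycle labelled $2$), and let $m$ be the number meeting both; then $a_1+a_2+m = j_1+j_2-2$, and the monomial contributed is $p_1 p_2 q_1^{a_1} q_2^{a_2+m}$. Since each cycle of $\sigma_1$ meets at least one cycle of $\sigma_2$, the number of cycles of $\sigma_1$ meeting the label-$1$ cycle is $a_1+m$ and the number meeting the label-$2$ cycle is $a_2+m$. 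The coefficient $[p_1p_2 q_1^{j_1-1} q_2^{j_2-1}]$ counts triples with $a_1 = j_1-1$, $a_2+m = j_2-1$; swapping the bijective label $f$ shows (using $j_1\neq j_2$, so no overcounting) that the symmetrization in \eqref{eq:stanley-polynomial-symmetry} is what lets us impose the condition $a_1+m \geq j_1$, $a_2+m\geq j_2$ symmetrically rather than the asymmetric $a_1 = j_1-1$. The subtracted term $[p_1p_2 q_2^{j_1+j_2-2}]$ counts exactly the triples with $a_1 = 0$, i.e.\ the label-$1$ cycle contributes no ``private'' cycles of $\sigma_1$; after the substitution this inclusion–exclusion collapses to counting the triples with $a_1+m\geq j_1$ and $a_2+m\geq j_2$, which is precisely condition (e). The main obstacle I anticipate is this last inclusion–exclusion/symmetrization step: one must be careful that the identities \eqref{eq:stanley-polynomial-symmetry} and \eqref{eq:kerov-polynomial-toy-example} are being applied in the right direction, that the hypothesis $j_1\neq j_2$ is genuinely used to avoid double-counting the two labellings, and that the passage from the two individual monomial coefficients (each of which counts triples with an \emph{equality} constraint on cycle intersection numbers) to the final \emph{inequality} constraint (e) is valid — this is where the combinatorics is subtlest and where a clean bijective reformulation, rather than raw coefficient extraction, will be worth setting up carefully.
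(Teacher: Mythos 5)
Your architecture is the paper's own proof run backwards: the paper starts from the count of triples, applies inclusion--exclusion, converts each term into monomial coefficients of $\Ch_k(\pp\times\qq)$ via the Stanley formula, and uses the symmetry \eqref{eq:stanley-polynomial-symmetry} to telescope down to the two-term expression \eqref{eq:kerov-polynomial-toy-example}; you start from \eqref{eq:kerov-polynomial-toy-example} and expand outward. Your dictionary (the quantities $a_1,a_2,m$; the observation that the $\max$-rule makes the exponent of $q_1$ equal to $a_1$ and that of $q_2$ equal to $a_2+m$; the identification of the subtracted coefficient with the $a_1=0$ triples) is correct and matches the paper's. But two steps are genuinely defective. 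First, the sign: you assert that $(-1)^{\sigma_1}$ ``comes out $+1$'' for the contributing triples. It does not. Since $\sigma_2$ has two cycles and $\sigma_1\sigma_2=(1,\dots,k)$, one has $(-1)^{\sigma_1}=(-1)^{\sigma_1\sigma_2}(-1)^{\sigma_2}=(-1)^{k-1}(-1)^{k-2}=-1$ (and your displayed identity for $\kappa(\sigma_1)+\kappa(\sigma_2)$, whose true value is simply $j_1+j_2$, is not correct). Hence each monomial coefficient equals \emph{minus} the corresponding number of triples; the theorem survives only because both coefficients on the right-hand side of \eqref{eq:kerov-polynomial-toy-example} carry this common factor $-1$. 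Followed literally, your bookkeeping yields $\#\{a_1=j_1-1\}-\#\{a_1=0\}$, the \emph{negative} of the correct answer.

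Second, the step you yourself flag as the ``main obstacle'' --- passing from the difference of two equality-constrained counts to the single inequality-constrained count (e) --- is the actual content of the proof and is never carried out. Concretely, with $A_i$ the number of triples satisfying (a)--(d) with $a_1=i$, condition (e) is equivalent to $a_1\le j_1-2$ and $a_2\le j_2-2$; inclusion--exclusion (whose double-overlap term vanishes because transitivity of $\langle\sigma_1,\sigma_2\rangle$ forces some cycle of $\sigma_1$ to meet both cycles of $\sigma_2$, i.e.\ $m\ge 1$) gives $N=\sum_{i\ge 0}A_i-\sum_{i\ge j_1-1}A_i-\sum_{i\ge j_2-1}A_i$, and only after invoking $A_i=A_{j_1+j_2-2-i}$ for $1\le i\le j_1+j_2-3$ --- which is exactly \eqref{eq:stanley-polynomial-symmetry}, and whose applicability over the whole needed range again uses $m\ge 1$ --- does this telescope to $A_0-A_{j_1-1}$. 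You name all the right ingredients but do not perform this computation, and you nowhere establish $m\ge 1$. Fixing the sign and writing out this telescoping sum (equivalently, the paper's passage from \eqref{eq:paskuda} through \eqref{eq:rown-a}--\eqref{eq:rown-d}) would complete your argument.
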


\begin{proof}
Let us compute the number of the triples
$(\sigma_1,\sigma_2,f)$ which are counted in the statement of the theorem. By the inclusion-exclusion principle it is equal
to 
\begin{multline}
\label{eq:paskuda}
\big(\text{\#triples which fulfill
conditions \ref{enum:quadratic-a}--\ref{enum:quadratic-c}}\big) \\
+(-1) \big(\text{\#triples for which the cycle $f^{-1}(1)$ intersects
at most $j_1-1$ cycles of $\sigma_1$}\big)  \\
+(-1) \big(\text{\#triples for which the cycle $f^{-1}(2)$ intersects
at most $j_2-1$ cycles of $\sigma_1$}\big). 
\end{multline}
It might seem that the above formula is not complete, since the inclusion-exclusion principle
has one term more; however one can easily check that in this case it vanishes.

By the Stanley character formula \eqref{eq:stanley-general} the first
summand of \eqref{eq:paskuda} is equal to 
\begin{equation}
\label{eq:rown-a}
(-1) \sum_{\substack{a+b=j_1+j_2-2,\\ 1\leq b}} \left[p_1 p_2 q_1^{a} q_2^{b} \right] \Ch_k(\pp\times \qq),
\end{equation}
the second summand of \eqref{eq:paskuda} is equal to 
\begin{equation}
\label{eq:rown-b}
  \sum_{\substack{a+b=j_1+j_2-2,\\ 1\leq a\leq j_1-1}} 
\left[p_1 p_2 q_1^{b} q_2^{a}\right] \Ch_k(\pp\times \qq), 
\end{equation}
and the third summand of \eqref{eq:paskuda} is equal to 
\begin{equation}
\label{eq:rown-c}
\sum_{\substack{a+b=j_1+j_2-2,\\ 1\leq b\leq j_2-1}}
\left[p_1 p_2 q_1^{a} q_2^{b} \right] \Ch_k(\pp\times \qq).
\end{equation}

We can apply \eqref{eq:stanley-polynomial-symmetry} to the
summands of \eqref{eq:rown-b}; it follows that
\eqref{eq:rown-b} is equal to 
\begin{equation}
 \label{eq:rown-d}
  \sum_{\substack{a+b=j_1+j_2-2,\\ 1\leq a\leq j_1-1}}
\left[p_1 p_2 q_1^{a} q_2^{b}  \right] \Ch_k(\pp\times \qq).
\end{equation}

The sum of \eqref{eq:rown-a}, \eqref{eq:rown-d}, and \eqref{eq:rown-c} 
is equal to the right-hand of \eqref{eq:kerov-polynomial-toy-example} which finishes
the proof.
\end{proof}

\section{Stanley character polynomials in action: \\ generic coefficients of Kerov polynomials}

The pattern which we encountered for the linear and for the quadratic terms of Kerov polynomials
turns out to hold true in general; we shall review it briefly.
It turns out that an arbitrary coefficient of the Kerov polynomial $K_k$ can be expressed
as a closed formula in terms of the coefficients of the Stanley polynomial $\Ch_k(\pp\times \qq)$ 
(for the special case of the linear terms see \eqref{eq:kerov-linear}; for the quadratic terms see 
\eqref{eq:kerov-polynomial-toy-example}).
Because of some symmetries of the Stanley character polynomial (analogous to \eqref{eq:stanley-polynomial-symmetry})
this closed formula can be rewritten in a form which allows a direct combinatorial interpretation
(for the special case of quadratic terms see Theorem \ref{theo:quadratic-reformulated} and its proof).
For the generic case this combinatorial interpretation has a flavour related to Theorem \ref{theo:quadratic-reformulated},
but it is much more complicated \cite{DolegaFeraySniady2010}. This gives a proof of the Kerov's positivity conjecture
together with an explicit combinatorial interpretation of the coefficients of Kerov polynomials.

Note that the first proof of Kerov's positivity conjecture  
(but without the explicit combinatorial interpretation)
was earlier, due to F\'eray \cite{Feray2009}.

\section{The structure behind Stanley character formula}
\label{sec:stanley-embedding}

\begin{figure}
\centering
\subfloat[][]{\begin{tikzpicture}[scale=0.5,
white/.style={circle,draw=black,inner sep=4pt},
black/.style={circle,draw=black,fill=black,inner sep=4pt},
connection/.style={draw=black!80,black!80,auto}
]
\footnotesize

\begin{scope}
\clip (0,0) rectangle (10,10);

\draw (3.333,2.333) node (b1)    [black,label=90:$\Pi$] {};
\draw (b1) +(10,0) node (b1prim) [black] {};

\draw (5,7.333) node (b2)     [black,label=0:$\Sigma$] {};
\draw (b2) +(0,-10) node (b2prim) [black] {};

\draw (b2) +(-3,-1) node (w2) [white,label=180:$W$] {};

\draw (6.666,3.333) node (w1) [white,label=45:$V$] {};
\draw (w1) +(-10,0) node (w1left) [white] {};
\draw (w1) +(0,10)  node (w1top)  [white] {};

\draw[connection] (b1) to node {$4$} node [swap] {} (w1);

\draw[connection] (b2) to node {$3$} node [swap] {} (w2);

\draw[connection,pos=0.2] (b2) to node {$2$} node [swap] {} (w1top);
\draw[connection,pos=0.7] (b2prim) to node {$2$} node [swap] {} (w1);

\draw[connection,pos=0.666] (b1prim) to node {$1$} node [swap] {} (w1);
\draw[connection,pos=0.333] (b1) to node {$1$} node [swap] {} (w1left);

\draw[connection,pos=0.5] (w1) to node {} node [swap] {$5$} (b2);

\end{scope}

\draw[very thick,decoration={
    markings,
    mark=at position 0.5 with {\arrow{>}}},
    postaction={decorate}]  
(0,0) -- (10,0);

\draw[very thick,decoration={
    markings,
    mark=at position 0.5 with {\arrow{>}}},
    postaction={decorate}]  
(0,10) -- (10,10)  ;

\draw[very thick,decoration={
    markings,
    mark=at position 0.5 with {\arrow{>>}}},
    postaction={decorate}]  
(0,0) -- (0,10);

\draw[very thick,decoration={
    markings,
    mark=at position 0.5 with {\arrow{>>}}},
    postaction={decorate}]  
(10,0) -- (10,10)  ;

\end{tikzpicture}
\label{subfig:map}}
\hfill
\subfloat[][]{
\begin{tikzpicture}[scale=1.2]
\begin{scope}

\draw[line width=5pt,black!20] (-0.2,0.5) -- (3.2,0.5);
\draw (3.2,0.5) node[anchor=west] {$\Sigma$};

\draw[line width=5pt,black!20] (-0.2,1.5) -- (1.2,1.5);
\draw (1.2,1.5) node[anchor=west] {$\Pi$};

\draw[line width=5pt,black!20] (2.5,-0.2) -- (2.5,1.2);
\draw (2.5,1.2) node[anchor=south] {$W$};

\draw[line width=5pt,black!20] (0.5,-0.2) -- (0.5,2.2);
\draw (0.5,2.2) node[anchor=south] {$V$};

\draw[ultra thick] (3,0) -- (3,1) -- (1,1) -- (1,2) -- (0,2) -- (0,0) -- cycle; 
\clip (0,0) -- (3,0) -- (3,1) -- (1,1) -- (1,2) -- (0,2); 
\draw (0,0) grid (3,3);
\end{scope}
\draw (0.5,-0.2) node[anchor=north,text height=8pt] {$a$};
\draw (1.5,-0.2) node[anchor=north,text height=8pt] {$b$};
\draw (2.5,-0.2) node[anchor=north,text height=8pt] {$c$};

\draw (-0.2,0.5) node[anchor=east]  {$\alpha$};
\draw (-0.2,1.5) node[anchor=east] {$\beta$};

\draw(2.5,0.5) node {$3$};
\draw(0.5,0.5) node {$2,5$};
\draw(0.5,1.5) node {$1,4$};

\end{tikzpicture}
\label{subfig:embed}}

\caption{\protect\subref{subfig:map} Map on the torus;
the left side of the square should be glued to the right side,
as well as bottom to top, as indicated by arrows.
The white vertices correspond to the cycles of
$\sigma_1=(1,5,4,2)(3)$, the black vertices correspond to the cycles of $\sigma_2=(2,3,5)(1,4)$,
the unique face corresponds to the unique cycle of $\pi=\sigma_1 \sigma_2=(1,2,3,4,5)$. 
\protect\subref{subfig:embed} An example of an embedding of this map: 
$F(\Sigma)=\alpha$, $F(\Pi)=\beta$, $F(V)=a$, $F(W)=c$.
$F(1)=F(4)=(a \beta)$, $F(2)=F(5)=(a \alpha)$, $F(3)=(c\alpha)$.
The columns of the Young diagram were indexed by Latin letters, the rows by Greek letters.}
\label{fig:embedding}
\end{figure}
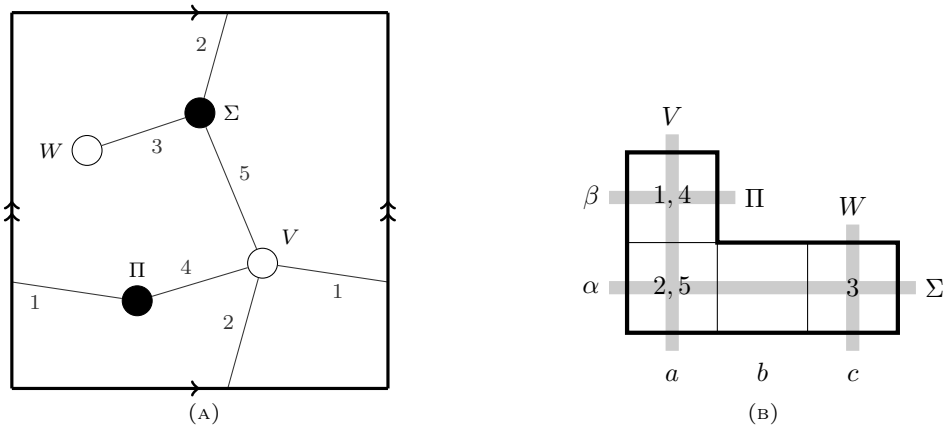

It seems that the true difficulty related to Stanley's character formula \eqref{eq:stanley-general}
was to guess its correct form. Indeed, once Conjecture \ref{conj:stanley-conjecture} has been formulated 
by Stanley \cite{Stanley2006}, 
a proof has been found by F\'eray \cite{Feray2010} only a half a year later
(some partial results have been obtained also by Rattan \cite{Rattan2008}).
Another half a year later, a more elementary proof has been found \cite{FeraySniady2011}.
In the same paper a combinatorial interpretation of the mysterious 
right-hand-side of Stanley's character formula \eqref{eq:stanley-general} has been found.
We will review it in the following.

Firstly, it is convenient to represent a pair of permutations $\sigma_1,\sigma_2\in\Sym{k}$ as an \emph{oriented map} \cite{LandoZvonkin2004}, i.e.,
as a bipartite graph with labeled edges embedded into an oriented surface, see Figure \ref{subfig:map}, 
determined as follows.
The cycles of the permutation $\sigma_1$ (resp., $\sigma_2$) determine 
the cyclic order (counterclockwise) of the labels around black (resp., white) vertices. 
The faces of the map correspond to the cycles of the product $\pi=\sigma_1 \sigma_2$.

An \emph{embedding} of such a map to a Young diagram $\lambda$ is a function which 
associates to white vertices -- columns of $\lambda$, to black vertices -- rows of $\lambda$, and to edges -- 
boxes of $\lambda$
(these functions need not to be injective). We also require that an embedding should preserve the incidence,
i.e.~if a vertex $V$ is incident to an edge $E$, then they should be mapped to a row or column $F(V)$ 
which contains the box $F(E)$, see Figure \ref{fig:embedding}.

It is rather easy to see that the right-hand side of Stanley character formula \eqref{eq:stanley-general} 
has a nice combinatorial interpretation as a 
(signed) sum of the numbers of embedding of all maps with a fixed face-structure $\pi=\sigma_1 \sigma_2$.

A very nice feature of this viewpoint on the Stanley character formula is that the asymptotic behavior of
each summand on a dilated Young diagram $s\lambda$ in the limit $s\to\infty$ depends directly on 
the Euler characteristic of the map. 
In the case of the characters $\Ch_k$ corresponding to a single cycle, 
the maximal contribution comes from \emph{planar maps}, i.e.~maps, which are drawn on the surface of a sphere.
This, together with the definition \eqref{eq:def-free-cumulants}, gives immediately a concrete
formula for free cumulants. We leave it as a simple exercise to the reader to recover 
\eqref{eq:free-cumulants-first-approx} and \eqref{eq:free-cumulant-multirectangular} in this way.

The Stanley character formula, viewed as in the current section, was the key in the proof of
new bounds on the character ratios $\frac{\chi^{\lambda}(\pi)}{\chi^{\lambda}(\Id)}$ in the case when the length of
the permutation $\pi\in\Sym{k}$ becomes large when compared to $k$ \cite{FeraySniady2011}.

\section{Further perspectives}
\label{sec:jack}

We will review some further advancements related to Stanley character polynomials and the Stanley character formula;
they indicate that the field remains active.

The characters of the symmetric groups can be alternatively viewed as the \emph{spherical functions}
of the \emph{Gelfand pair} $(\Sym{k}\times \Sym{k}, \Sym{k})$, where $\Sym{k}$ is viewed as a subgroup
of $\Sym{k}\times \Sym{k}$ via the diagonal map. It been observed by Scarabotti \cite{Scarabotti2011}
that an analogue of Stanley's formula
\eqref{eq:stanley-general} holds true as well for the Gelfand pair
$(\Sym{k}\times \Sym{k-1}, \Sym{k-1})$.

The paper \cite{AvalFerayNovelliThibon2013} concerns the most general form of polynomials which could arise as Stanley polynomials, as well as some non-commutative generalizations. 

Probably the most interesting direction of development in the field of Stanley polynomials 
is related to \emph{Jack characters}.
They are a one-parameter deformation of the characters of the symmetric groups,
and describe the dual combinatorics of \emph{Jack symmetric functions}.
Jack characters were introduced by Lassalle \cite{Lassalle2008,Lassalle2009}
who also formulated several challenging conjectures concerning them, in particular 
an analogue of Kerov positivity conjecture. 
These conjectures hint on existence of some hidden, highly mysterious combinatorial structure behind
Jack symmetric functions.
A natural direction of attack would be to guess and prove an analogue of the Stanley's character formula
\eqref{eq:stanley-general}. 
There are some hints on how this generalization should look like:
for a special value $\gamma=0$ of the deformation parameter
we recover the usual characters of the symmetric groups
and the Stanley character formula \eqref{eq:stanley-general} is available; 
viewed as in Section \ref{sec:stanley-embedding} it involves a summation over \emph{oriented} maps. 
For other special choices $\gamma=\pm \frac{1}{\sqrt{2}}$ an analogue of the Stanley character formula is also known \cite{FeraySniady2011a}, but this time it involves a summation over \emph{all, possibly non-orientable} maps.
Thus it is natural to suspect that the hypothetical Stanley character formula for the general case should
involve some weight which measures ``non-orientability'' of a given map.
Solution of this problem would hopefully shed some light on some old conjectures concerning
the hypothetical combinatorial interpretation of some other quantities 
related to Jack symmetric functions, including
the \emph{matching Jack conjecture} of Goulden and Jackson
\cite{GouldenJackson1996,LaCroix2009}.
Unfortunately, making the right guess for the Stanley formula is not an easy task \cite{DolegaFeraySniady2013}.
In order to tease the Reader, we provide a toy example below in the case of two rectangles.
\begin{dmath*}
\Ch_3^{(\gamma)}(\pp\times \qq)=
p_{1}^{3} q_{1} - 3 p_{1}^{2} q_{1}^{2} + p_{1} q_{1}^{3} + 3 p_{1}^{2}
p_{2} q_{2} + 3 p_{1} p_{2}^{2} q_{2} + p_{2}^{3} q_{2} - 3 p_{1} p_{2}
q_{1} q_{2} - 3 p_{1} p_{2} q_{2}^{2} - 3 p_{2}^{2} q_{2}^{2} + p_{2}
q_{2}^{3} - 3 p_{1}^{2} q_{1} \gamma + 3 p_{1} q_{1}^{2} \gamma - 6
p_{1} p_{2} q_{2} \gamma - 3 p_{2}^{2} q_{2} \gamma + 3 p_{2} q_{2}^{2}
\gamma + 2 p_{1} q_{1} \gamma^{2} + 2 p_{2} q_{2} \gamma^{2} + p_{1}
q_{1} + p_{2} q_{2}.
\end{dmath*}
Maybe Richard Stanley could again help and guess the formula standing behind such computer-generated data?

Finally, to readers who would like to get a different perspective on Kerov polynomials and the Stanley character formula
we recommend an overview article \cite{Sniady2012}.

\bibliographystyle{alpha}

\bibliography{stanley}

\end{document}